\newtheorem{theorem}{Theorem}[section]
\newtheorem{proposition}[theorem]{Proposition}
\newtheorem{lemma}[theorem]{Lemma}
\newtheorem{corollary}[theorem]{Corollary}
\newtheorem{remark}[theorem]{Remark}
\DeclareMathOperator{\con}{con}
\DeclareMathOperator{\Con}{Con}
\DeclareMathOperator{\Eq}{Eq}
\DeclareMathOperator{\GCon}{GCon}
\DeclareMathOperator{\Orb}{Orb}
\DeclareMathOperator{\partition}{part}
\DeclareMathOperator{\Stab}{Stab}
\DeclareMathOperator{\Sub}{Sub}
\DeclareMathOperator{\var}{var}
\renewcommand*\subjclass[2][2010]{\def\@subjclass{#2}\@ifundefined{subjclassname@#1}{\ClassWarning{\@classname}{Unknown edition (#1) of Mathematics Subject Classification; using '2010'.}}{\@xp\let\@xp\subjclassname\csname subjclassname@#1\endcsname}}
\begin{document}

\title[Cancellable elements in the lattice of varieties]{Cancellable elements in the lattice of overcommutative semigroup varieties}
\thanks{Both the authors are supported by the Ministry of Science and Higher Education of the Russian Federation (project 1.6018.2017/8.9) and by Russian Foundation for Basic Research (the first author by grant 18-31-00443; the second author by grant 17-01-00551).}

\author[V. Yu. Shaprynski\v{\i}]{Vyacheslav Yu. Shaprynski\v{\i}}
\address{Institute of Natural Sciences and Mathematics, Ural Federal University, Lenina str. 51, 620000 Ekaterinburg, Russia}
\email{vshapr@yandex.ru}

\author[B. M. Vernikov]{Boris M. Vernikov}
\address{Institute of Natural Sciences and Mathematics, Ural Federal University, Lenina str. 51, 620000 Ekaterinburg, Russia}
\email{bvernikov@gmail.com}

\keywords{Semigroup, variety, lattice of subvarieties, overcommutative variety, cancellable element of a lattice}

\subjclass{Primary 20M07, secondary 08B15}

\begin{abstract}
We completely determine all cancellable elements in the lattice $\mathbb{OC}$ of overcommutative semigroup varieties. In particular, we prove that an overcommutative semigroup variety is a cancellable element of the lattice  $\mathbb{OC}$ if and only if it is a neutral element of this lattice.
\end{abstract}

\maketitle

\section{Introduction}
\label{intr}

The class of all semigroup varieties forms a lattice under the following naturally defined operations: for varieties $\mathbf X$ and $\mathbf Y$, their \emph{join} $\mathbf{X\vee Y}$ is the variety generated by the set-theoretical union of $\mathbf X$ and $\mathbf Y$ (as classes of semigroups), while their \emph{meet} $\mathbf{X\wedge Y}$ coincides with the set-theoretical intersection of $\mathbf X$ and $\mathbf Y$. This lattice has been intensively studied for more than five decades. A systematic overview of the material accumulated here is given in the survey \cite{Shevrin-Vernikov-Volkov-09}. We will denote the lattice of all semigroup varieties by $\mathbb{SEM}$. It is well known that the lattice $\mathbb{SEM}$ is the disjoint union of two large sublattices with essentially different properties: the coideal $\mathbb{OC}$ of all \emph{overcommutative} varieties (that is, varieties containing the variety of all commutative semigroups) and the ideal of all \emph{periodic} varieties (that is, varieties consisting of periodic semigroups). The global structure of the lattice $\mathbb{OC}$ has been revealed by Volkov in~\cite{Volkov-94}. It is proved there that this lattice is decomposed into a subdirect product of certain its intervals and each of these intervals is anti-isomorphic to the congruence lattice of a certain unary algebra of a special type (so-called $G$-set). We reproduce this result below (see Proposition~\ref{OC-struct}).

In the lattice theory, a significant attention is paid to the consideration of special elements of different types. Recall definitions of types of elements that will be mentioned below. An element $x$ of a lattice $\langle L;\ \vee,\wedge\rangle$ is called
\begin{align*}
&\text{\emph{cancellable} if}\quad&&\forall\,y,z\in L\colon\quad x\vee y=x\vee z\ \&\ x\wedge y=x\wedge z\longrightarrow y=z;\\
&\text{\emph{distributive} if}\quad&&\forall\,y,z\in L\colon\quad x\vee(y\wedge z)=(x\vee y)\wedge(x\vee z);\\
&\text{\emph{standard} if}\quad&&\forall\,y,z\in L\colon\quad(x\vee y)\wedge z=(x\wedge z)\vee(y\wedge z);\\
&\text{\emph{modular} if}\quad&&\forall\,y,z\in L\colon\quad y\le z\longrightarrow(x\vee y)\wedge z=(x\wedge z)\vee y;
\end{align*}
\emph{neutral} if, for all $y,z\in L$, the sublattice of $L$ generated by $x$, $y$ and $z$ is distributive. It is well known (see~\cite[Theorem~254]{Gratzer-11}, for instance) that an element $x\in L$ is neutral if and only if
$$
\forall\,y,z\in L\colon\quad(x\vee y)\wedge(y\vee z)\wedge(z\vee x)=(x\wedge y)\vee(y\wedge z)\vee(z\wedge x).
$$
\emph{Codistributive} and \emph{costandard} elements are defined dually to distributive and standard ones respectively. An extensive information about elements of all these types in abstract lattices may be found in~\cite[Section~III.2]{Gratzer-11}, for instance. Note that any neutral element is standard and costandard, any [co]standard element is both [co]distributive and cancellable, and any cancellable element is modular. All these claims are evident except the statement that a [co]standard element is [co]distributive; the verification of this fact may be found in~\cite[Theorem~253]{Gratzer-11}, for instance.{\sloppy

}

Over the past two decades, a number of papers have appeared devoted to the study of special elements of various types in the lattice $\mathbb{SEM}$ and some its sublattices, including the lattice $\mathbb{OC}$. An overview of results obtained in this area before 2015 may be found in the survey~\cite{Vernikov-15} (see also~\cite[Section~14]{Shevrin-Vernikov-Volkov-09}). From later works on this topic, we note the articles~\cite{Gusev-Skokov-Vernikov-18,Shaprynskii-Skokov-Vernikov-a,Skokov-Vernikov-19} devoted to examination of cancellable elements in the lattice $\mathbb{SEM}$. These elements are completely determined in~\cite{Shaprynskii-Skokov-Vernikov-a}, while earlier articles~\cite{Gusev-Skokov-Vernikov-18,Skokov-Vernikov-19} contain some partial results in this direction.

An examination of special elements in the lattice $\mathbb{OC}$ has been started by the second author in~\cite{Vernikov-01}. A description of five types of special elements (namely, distributive, codistributive, standard, costandard and neutral elements) in $\mathbb{OC}$ has been presented there. But the considerations in~\cite{Vernikov-01} contain a gap, and the main result of this article is incorrect. Namely, it was proved in~\cite{Vernikov-01} that, for an overcommutative semigroup variety, the properties of being a distributive, codistributive, standard, costandard or neutral element of $\mathbb{OC}$ are equivalent. This result of~\cite{Vernikov-01} is true. But, besides that, the main result of~\cite{Vernikov-01} contains a list of all overcommutative varieties that possess the five mentioned properties. This list is incomplete. All varieties from the list really have all the mentioned properties, but there are many other such varieties. A correct description of special elements of five mentioned types in $\mathbb{OC}$ is given in~\cite{Shaprynskii-Vernikov-11}.{\sloppy

}

The aim of this article is to classify all cancellable elements of the lattice $\mathbb{OC}$. In fact, we prove that an overcommutative semigroup variety is a cancellable element in $\mathbb{OC}$ if and only if it is a neutral, [co]standard or [co]distributive element in $\mathbb{OC}$.

The article is structured as follows. In Section~\ref{prel}, we introduce a necessary notation and formulate the main result of the article (Theorem~\ref{main}). In Section~\ref{G-set}, we recall a necessary information about $G$-sets. In Section~\ref{OC-structure}, we reproduce results of the article~\cite{Volkov-94}. Finally, Section~\ref{proof} is devoted to the proof of Theorem~\ref{main}. 

\section{Preliminaries and summary}
\label{prel}

We denote by $F$ the free semigroup over a countably infinite alphabet $A=\{x_1,x_2$, $\dots,x_n,\dots\}$. Elements of both $F$ and $A$ are denoted by small Latin letters. However, elements of $F$ for which it is not known exactly that they belong to $A$ are written in bold. As usual, elements of $A$ and $F$ are called \emph{letters} and \emph{words} respectively. We connect two sides of identities by the symbol $\approx$ and use the symbol $=$, among other things, for the equality relation on $F$. If $\mathbf u$ is a word then $\ell(\mathbf u)$ denotes the length of $\mathbf u$, $\ell_i(\mathbf u)$ is the number of occurrences of the letter $x_i$ in $\mathbf u$ and $\con(\mathbf u)$ stands for the set of all letters occurring in $\mathbf u$. An identity $\mathbf{u\approx v}$ is called \emph{balanced} if $\ell_i(\mathbf u)=\ell_i(\mathbf v)$ for all $i$. It is a common knowledge that if an overcommutative semigroup variety satisfies some identity then this identity is balanced.

Let $m$ and $n$ be integers with $2\le m\le n$. A \emph{partition of the number $n$ into $m$ parts} is a sequence of positive integers $\lambda=(\ell_1,\ell_2,\dots,\ell_m)$ such that
$$
\ell_1\ge\ell_2\ge\cdots\ge\ell_m\quad\text{and}\quad\sum_{i=1}^m\ell_i=n.
$$
We denote by $\Lambda_{n,m}$ the set of all partitions of the number $n$ into $m$ parts and put $\Lambda=\bigcup_{2\le m\le n}\Lambda_{n,m}$.

If $\mathbf u$ is a word then we denote by $\partition(\mathbf u)$ the partition of the number $\ell(\mathbf u)$ into $|\con(\mathbf u)|$ parts consisting of integers $\ell_i(\mathbf u)$ for all $i$ such that $x_i\in\con(\mathbf u)$ (the numbers $\ell_i(\mathbf u)$ are placed in $\partition(\mathbf u)$ in non-increasing order). If $\mathbf{u\approx v}$ is a balanced identity then, obviously, $\ell(\mathbf u)=\ell(\mathbf v)$, $|\con(\mathbf u)|=|\con(\mathbf v)|$ and $\partition(\mathbf u)=\partition(\mathbf v)$. We call the number $\ell(\mathbf u)=\ell(\mathbf v)$ a \emph{length} of the balanced identity $\mathbf{u\approx v}$.

Let $\lambda=(\ell_1,\ell_2,\dots,\ell_m)\in\Lambda_{n,m}$. We denote by $W_{n,m,\lambda}$, or simply $W_\lambda$, the set of all words $\mathbf u$ such that $\ell(\mathbf u)=n$, $\con(\mathbf u)=\{x_1,x_2,\dots,x_m\}$, $\ell_i(\mathbf u)\ge\ell_{i+1}(\mathbf u)$ for all $i=1,2,\dots,m-1$ and $\partition(\mathbf u)=\lambda$. It is evident that every balanced identity $\mathbf{u\approx v}$ with $\ell(\mathbf u)=\ell(\mathbf v)=n$, $|\con(\mathbf u)|=|\con(\mathbf v)|=m$ and $\partition(\mathbf u)=\partition(\mathbf v)=\lambda$ is equivalent to some identity $\mathbf{s\approx t}$ with $\mathbf s,\mathbf t\in W_{n,m,\lambda}$.

We call sets of the kind $W_{n,m,\lambda}$ \emph{transversals}. We say that an overcommutative variety $\mathbf V$ \emph{reduces} [\emph{collapses}] a transversal $W_{n,m,\lambda}$ if $\mathbf V$ satisfies some non-trivial identity [all identities] of the kind $\mathbf{u\approx v}$ with $\mathbf u,\mathbf v\in W_{n,m,\lambda}$. An overcommutative variety $\mathbf V$ is said to be \emph{greedy} if it collapses any transversal it reduces. The following assertion readily follows from the proof of~\cite[Theorem~2]{Vernikov-01} (and the corresponding part of the proof in~\cite{Vernikov-01} is correct).

\begin{proposition}
\label{neutral etc are greedy}
An overcommutative semigroup variety is a neutral \textup[standard, costandard, distributive, codistributive\textup] element of the lattice $\mathbb{OC}$ if and only if it is greedy.\qed
\end{proposition}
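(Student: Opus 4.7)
The plan is to leverage Volkov's decomposition, recalled in Proposition~\ref{OC-struct}, which embeds $\mathbb{OC}$ as a subdirect product of certain intervals indexed by transversals $W_\lambda$, each such interval being anti-isomorphic to a congruence lattice of a $G$-set. Under this embedding, each overcommutative variety $\mathbf{V}$ has a $\lambda$-projection encoding precisely the identities $\mathbf{u}\approx\mathbf{v}$ with $\mathbf{u},\mathbf{v}\in W_\lambda$ that $\mathbf{V}$ satisfies. Hence $\mathbf{V}$ reduces $W_\lambda$ exactly when this projection is not the trivial element in its factor, and collapses $W_\lambda$ exactly when the projection is the opposite extreme; greediness of $\mathbf{V}$ is therefore the statement that each $\lambda$-projection of $\mathbf{V}$ is either the top or the bottom of its factor.

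Since the correct portion of~\cite{Vernikov-01} establishes that the five named properties (neutral, [co]standard, [co]distributive) coincide for elements of $\mathbb{OC}$, it suffices to prove the equivalence of greediness with any one of them. I would prove \emph{greedy} $\Rightarrow$ \emph{neutral} and \emph{distributive} $\Rightarrow$ \emph{greedy}, and close the loop via the implication \emph{neutral} $\Rightarrow$ \emph{distributive}.

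For \emph{greedy} $\Rightarrow$ \emph{neutral}: every lattice has its top and bottom as neutral elements. Because the join and meet of $\mathbb{OC}$ agree with those inherited from the ambient direct product (this is the defining property of the subdirect embedding), the neutrality condition
\[
(x\vee y)\wedge(y\vee z)\wedge(z\vee x)=(x\wedge y)\vee(y\wedge z)\vee(z\wedge x)
\]
can be verified coordinate-wise; greediness makes every coordinate of $\mathbf{V}$ either top or bottom, so the verification is trivial in each factor.

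For the converse, assume $\mathbf{V}$ is not greedy: some transversal $W_\lambda$ is reduced but not collapsed, so the $\lambda$-projection of $\mathbf{V}$ is a proper, non-extreme element of its factor. I would then pick a pair $\mathbf{u}_0\ne\mathbf{v}_0$ in $W_\lambda$ identified by $\mathbf{V}$ and a pair $(\mathbf{u}_1,\mathbf{v}_1)$ in $W_\lambda$ not identified by $\mathbf{V}$, and use these to build witnesses $\mathbf{Y},\mathbf{Z}\in\mathbb{OC}$, each defined by identities supported only on $W_\lambda$, whose $\lambda$-projections together with that of $\mathbf{V}$ violate the distributive law inside the corresponding factor. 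Since $\mathbf{Y}$ and $\mathbf{Z}$ would be chosen to agree on every other transversal (e.g.\ imposing no extra identities outside $W_\lambda$), the global distributive law in $\mathbb{OC}$ reduces to the one in the $\lambda$-factor, giving the desired failure of distributivity for $\mathbf{V}$. The main obstacle will be executing this last step: one must use the explicit description of the $G$-set action to exhibit the three congruences in a single factor and then certify that they arise as $\lambda$-projections of genuine varieties.
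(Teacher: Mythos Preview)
The paper does not supply its own proof of this proposition: it is stated with a terminal \qed and attributed to the (correct portion of the) proof of \cite[Theorem~2]{Vernikov-01}. So there is no in-text argument to compare against; your proposal is effectively an attempt to reconstruct the external one.

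Your forward direction, greedy $\Rightarrow$ neutral, is correct and essentially complete: the subdirect embedding of Proposition~\ref{OC-struct} computes joins and meets coordinatewise, and an element whose every coordinate is the top or bottom of its factor satisfies the neutrality identity in each factor, hence globally in $\mathbb{OC}$.

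Your reverse direction, distributive $\Rightarrow$ greedy, has a genuine gap that you flag but do not close. The reduction step is fine: as the paper notes in the proof of Theorem~\ref{main} (for cancellability), if $\mathbf Y,\mathbf Z\in I_\lambda$ then $\mathbf Y_\mu=\mathbf Z_\mu$ for all $\mu\ne\lambda$, so a failure of distributivity inside the single factor $I_\lambda$ lifts to a failure in $\mathbb{OC}$. What is missing is the substantive lattice-theoretic claim: \emph{in $\Con(W_\lambda)$, every congruence other than $\Delta_{W_\lambda}$ and $\nabla_{W_\lambda}$ fails to be (co)distributive}. Your sketch---pick one identified pair $(\mathbf u_0,\mathbf v_0)$ and one non-identified pair $(\mathbf u_1,\mathbf v_1)$ and ``build witnesses''---does not yet produce congruences $\beta,\gamma$ with $\nu_\lambda\vee(\beta\wedge\gamma)\ne(\nu_\lambda\vee\beta)\wedge(\nu_\lambda\vee\gamma)$. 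This is not automatic: in the transitive case $\lambda_1=1$ one has $\Con(W_\lambda)\cong\Sub(S_m)$, and you would need that no proper nontrivial subgroup of $S_m$ is distributive in $\Sub(S_m)$; in the non-transitive case you need a construction analogous to (but not identical with) that of Proposition~\ref{canc in G-set}, which treats cancellability---a property not implied by distributivity. So the strategy is sound and matches how the paper handles the parallel implication for cancellable elements, but the core step remains to be carried out.
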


For a partition $\lambda=(\ell_1,\ell_2,\dots,\ell_m)\in\Lambda_{n,m}$, we define numbers $q(\lambda)$, $r(\lambda)$, and $s(\lambda)$ by the following way:
\begin{align*}
&q(\lambda)\ \text{is the number of}\ \ell_i\text{'s with}\ \ell_i=1\ \text{(if}\ \ell_m>1\ \text{then}\ q(\lambda)=0\text{);}\\
&r(\lambda)\ \text{is the sum of all}\ \ell_i\text{'s with}\ \ell_i>1\ \text{(if}\ \ell_1=1\ \text{then}\ r(\lambda)=0\text{);}\\
&s(\lambda)=\max\,\{r(\lambda)-q(\lambda)-\delta,0\}
\end{align*}
where
$$\delta=
\begin{cases}
0&\text{whenever}\ n=3,m=2,\ \text{and}\ \lambda=(2,1),\\
1&\text{otherwise}.
\end{cases}$$
If $k$ is a non-negative integer then $\lambda^k$ stands for the following partition of the number $n+k$ into $m+k$ parts:
$$
\lambda^k=(\ell_1,\ell_2,\dots,\ell_m,\underbrace{1,\dots,1}_{k\ \text{times}})
$$
(in particular, $\lambda^0=\lambda$).

We denote by $\var\Sigma$ the semigroup variety given by the identity system $\Sigma$. For a partition $\lambda\in\Lambda_{n,m}$, we put
$$
\mathbf W_{n,m,\lambda}=\var\{\mathbf{u\approx v}\mid \mathbf u,\mathbf v\in W_{n,m,\lambda}\}\quad\text{and}\quad\mathbf S_\lambda=\bigwedge_{i=0}^{s(\lambda)}\mathbf W_{n+i,m+i,\lambda^i}.
$$
We denote by $\mathbf{SEM}$ the variety of all semigroups. The main result of this article is the following

\begin{theorem}
\label{main}
For an overcommutative semigroup variety $\mathbf V$, the following are equivalent:
\begin{itemize}
\item[\textup{a)}] $\mathbf V$ is a cancellable element of the lattice $\mathbb{OC}$;
\item[\textup{b)}] $\mathbf V$ is a greedy variety;
\item[\textup{c)}] either $\mathbf{V=SEM}$ or $\mathbf V=\bigwedge_{i=1}^k\mathbf S_{\lambda_i}$ for some $\lambda_1,\lambda_2,\dots,\lambda_k\in\Lambda$.
\end{itemize}
\end{theorem}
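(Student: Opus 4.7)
The plan is to establish the cycle (b) $\Rightarrow$ (a) $\Rightarrow$ (b) together with the equivalence (b) $\Leftrightarrow$ (c). The implication (b) $\Rightarrow$ (a) is essentially free from what has been recorded so far: Proposition~\ref{neutral etc are greedy} identifies greedy overcommutative varieties with neutral elements of $\mathbb{OC}$, and the introduction notes that every neutral element is standard and every standard element is cancellable. So the real work lies in (a) $\Rightarrow$ (b) and in the explicit description (b) $\Leftrightarrow$ (c).

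For (a) $\Rightarrow$ (b) I would argue by contrapositive, using the $G$-set framework of Proposition~\ref{OC-struct}: the lattice $\mathbb{OC}$ embeds subdirectly into a product of congruence lattices of $G$-sets, and the transversals $W_{n,m,\lambda}$ correspond, under the anti-isomorphism, to orbits of these $G$-sets. If $\mathbf V$ is not greedy, then it reduces some $W_{n,m,\lambda}$ without collapsing it, so in the relevant $G$-set factor the congruence corresponding to $\mathbf V$ restricts to a partition of the corresponding orbit into more than one block and more than one element per block in some block structure. On such an orbit one can exhibit two distinct $G$-invariant equivalences $\sigma_1\ne\sigma_2$ whose meet and join with the restriction of $\mathbf V$'s congruence coincide (a standard coset-type construction, available since the orbit's block structure is not $\{0,1\}$). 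Extending $\sigma_1,\sigma_2$ by the diagonal on every other orbit and factor and pulling the result back through the anti-isomorphism and the subdirect embedding yields varieties $\mathbf Y\ne \mathbf Z$ with $\mathbf{V\vee Y=V\vee Z}$ and $\mathbf{V\wedge Y=V\wedge Z}$, contradicting cancellability.

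For (b) $\Leftrightarrow$ (c) I would first verify that each $\mathbf S_\lambda$, and hence every finite meet $\bigwedge_i \mathbf S_{\lambda_i}$, is greedy, and also that $\mathbf{SEM}$ is vacuously greedy. The crucial combinatorial input is a propagation lemma: the set of transversals whose collapse is forced by collapsing $W_{n,m,\lambda}$ is exactly $\{W_{n+i,m+i,\lambda^i}:0\le i\le s(\lambda)\}$. The definition of $s(\lambda)$ in terms of $r(\lambda)$, $q(\lambda)$ and the $\delta$-correction pinpoints precisely how many trailing singletons can be appended to $\lambda$ before a collapse of $W_{n,m,\lambda}$ forces additional identifications on a further transversal lying outside this family; the exceptional case $n=3$, $m=2$, $\lambda=(2,1)$ must be treated by direct word manipulation because there an additional degree of freedom survives. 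Conversely, given a greedy variety $\mathbf V\ne \mathbf{SEM}$, let $\{\lambda_j\}$ enumerate the partitions whose transversals $\mathbf V$ collapses; greediness together with the propagation lemma shows that $\mathbf V$ and $\bigwedge_j \mathbf S_{\lambda_j}$ collapse exactly the same transversals, and the subdirect embedding of $\mathbb{OC}$ then forces them to coincide.

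I expect the main obstacle to be the propagation step underlying (b) $\Leftrightarrow$ (c): proving that $\{\lambda^i:0\le i\le s(\lambda)\}$ is \emph{exactly} the set of partitions whose transversals must be collapsed in tandem requires a delicate analysis of how identities on $W_{n,m,\lambda}$ interact with identities on transversals obtained by appending fresh letters, and is the unique place where the precise form of $s(\lambda)$ and its exceptional value enter essentially. By comparison, the step (a) $\Rightarrow$ (b) is routine once the $G$-set picture of Section~\ref{OC-structure} is available, and (b) $\Rightarrow$ (a) requires no additional argument beyond citing Proposition~\ref{neutral etc are greedy}.
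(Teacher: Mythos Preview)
Your overall architecture matches the paper's: (b) $\Leftrightarrow$ (c) is taken from \cite[Proposition~2.4]{Shaprynskii-Vernikov-11} (so the propagation analysis you sketch is not needed here), (b) $\Rightarrow$ (a) is immediate from Proposition~\ref{neutral etc are greedy}, and (a) $\Rightarrow$ (b) is handled through the subdirect decomposition of Proposition~\ref{OC-struct}. But you have the relative difficulties reversed, and there is a genuine gap in your treatment of (a) $\Rightarrow$ (b). First, a terminological slip: in Proposition~\ref{OC-struct} each transversal $W_\lambda$ is itself the whole $S_\lambda$-set, not an orbit inside a larger $G$-set; ``$\mathbf V$ reduces but does not collapse $W_\lambda$'' means exactly that the corresponding congruence $\nu_\lambda\in\Con(W_\lambda)$ is neither the equality nor the universal relation, and there is no single ``corresponding orbit'' to restrict to.

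More seriously, the step you call ``a standard coset-type construction, available since the orbit's block structure is not $\{0,1\}$'' is precisely the technical core of the paper, and it is not routine. What must be shown is that only $\nabla_{W_\lambda}$ and $\Delta_{W_\lambda}$ are cancellable in $\Con(W_\lambda)$. When $\lambda_1>1$ the $S_\lambda$-set $W_\lambda$ is non-transitive with all stabilizers trivial, and the paper proves the claim as Proposition~\ref{canc in G-set} via a three-stage argument (first the congruence is greedy, then it isolates every orbit, then it is trivial on each orbit), each stage requiring explicitly constructed witness congruences $\beta\ne\gamma$; your proposal to build witnesses inside one orbit cannot work, since a congruence that merely connects two distinct orbits has trivial restriction to every orbit. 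When $\lambda_1=1$ the set $W_\lambda$ is transitive, $\Con(W_\lambda)\cong\Sub(S_m)$, and one needs the separate, nontrivial fact (Lemma~\ref{canc in S_n}, imported from \cite{Shaprynskii-Skokov-Vernikov-a}) that only the trivial subgroup and $S_m$ are cancellable in $\Sub(S_m)$; no ``coset-type'' construction on a single orbit yields this.
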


Note that the equivalence of the claims~b) and~c) of this theorem is proved in~\cite[Proposition~2.4]{Shaprynskii-Vernikov-11}.

Theorem~\ref{main} together with~\cite[Theorem~2.2]{Shaprynskii-Vernikov-11} (see also Proposition~\ref{neutral etc are greedy} above) immediately imply the following

\begin{corollary}
\label{6 types of elements}
For an overcommutative semigroup variety $\mathbf V$, the following are equivalent:
\begin{itemize}
\item[\textup{a)}] $\mathbf V$ is a neutral element of the lattice $\mathbb{OC}$;
\item[\textup{b)}] $\mathbf V$ is a standard element of the lattice $\mathbb{OC}$;
\item[\textup{c)}] $\mathbf V$ is a costandard element of the lattice $\mathbb{OC}$;
\item[\textup{d)}] $\mathbf V$ is a distributive element of the lattice $\mathbb{OC}$;
\item[\textup{e)}] $\mathbf V$ is a codistributive element of the lattice $\mathbb{OC}$;
\item[\textup{f)}] $\mathbf V$ is a cancellable element of the lattice $\mathbb{OC}$.\qed
\end{itemize}
\end{corollary}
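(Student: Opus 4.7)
The equivalence of (b) and (c) has been proved in~\cite[Proposition~2.4]{Shaprynskii-Vernikov-11}, so my plan is to establish (a) $\Leftrightarrow$ (b). The direction (b) $\Rightarrow$ (a) is immediate from the preliminaries: by Proposition~\ref{neutral etc are greedy}, any greedy variety is a standard (indeed neutral) element of $\mathbb{OC}$, and every standard element of any lattice is cancellable.

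The substance of the theorem is therefore (a) $\Rightarrow$ (b), which I would attack by contraposition. Suppose $\mathbf V \in \mathbb{OC}$ is not greedy. Then by definition there is some transversal $W_{n,m,\lambda}$ that $\mathbf V$ reduces but does not collapse. I would then invoke Volkov's decomposition (Proposition~\ref{OC-struct}) to focus on the interval of $\mathbb{OC}$ associated with this transversal: via the anti-isomorphism with $\Con(\mathcal{G})$ for the appropriate $G$-set $\mathcal{G}$ whose underlying set is built from $W_{n,m,\lambda}$, the image of $\mathbf V$ in this interval corresponds to a $G$-congruence $\alpha$ on $\mathcal{G}$ that is neither the identity relation nor the universal relation. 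The goal is then to produce two distinct $G$-congruences $\beta \ne \gamma$ on $\mathcal{G}$ with $\alpha \vee \beta = \alpha \vee \gamma$ and $\alpha \wedge \beta = \alpha \wedge \gamma$, and to lift them, using the subdirect decomposition of $\mathbb{OC}$, to overcommutative varieties $\mathbf Y, \mathbf Z$ witnessing the failure of cancellation at $\mathbf V$.

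The combinatorial core is thus the construction of $\beta,\gamma$ in $\Con(\mathcal{G})$. Picking a non-singleton $\alpha$-block $C$ and an element $y \notin C$, I would use the $G$-action to engineer two congruences that identify $y$ with different elements of $C$ (or, in degenerate cases, exploit a second non-trivial $\alpha$-orbit), arranged so that their joins with $\alpha$ coincide while their meets with $\alpha$ remain equal and minimal. I expect the main obstacle to be twofold: first, the technical verification that such $\beta, \gamma$ really do exist in $\Con(\mathcal{G})$ for every possible non-trivial, non-universal $\alpha$ --- the various $G$-sets arising from different parameters $(n,m,\lambda)$, including the exceptional case $(3,2,(2,1))$ singled out in the definition of $s(\lambda)$, must be handled by a uniform argument or by a careful case analysis; second, lifting $\beta,\gamma$ to genuine overcommutative varieties that agree with $\mathbf V$ on the other transversals, which requires using Volkov's subdirect decomposition to guarantee that $\mathbf Y,\mathbf Z$ globally satisfy both cancellation equations while still being distinct.
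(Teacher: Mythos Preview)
You have conflated two statements. The items (a), (b), (c) you manipulate are those of Theorem~\ref{main} (cancellable, greedy, explicit description), not of the Corollary (neutral, standard, costandard, distributive, codistributive, cancellable); in particular \cite[Proposition~2.4]{Shaprynskii-Vernikov-11} is cited in the paper for the equivalence ``greedy $\Leftrightarrow$ explicit form'' in Theorem~\ref{main}, not for ``standard $\Leftrightarrow$ costandard''. In the paper the Corollary is a one-line consequence: Proposition~\ref{neutral etc are greedy} (equivalently \cite[Theorem~2.2]{Shaprynskii-Vernikov-11}) already gives (a)$\Leftrightarrow$(b)$\Leftrightarrow$(c)$\Leftrightarrow$(d)$\Leftrightarrow$(e)$\Leftrightarrow$greedy, and Theorem~\ref{main} gives greedy$\Leftrightarrow$(f). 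No further argument is needed.

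What you actually sketch is a proof of the implication a)$\Rightarrow$b) of Theorem~\ref{main}. Read that way, the overall strategy (pass to $\Con(W_\lambda)$ via Proposition~\ref{OC-struct} and show a non-trivial, non-universal congruence fails cancellation) agrees with the paper, but three points deserve correction. First, the ``lifting'' worry is empty: elements of the interval $I_\lambda$ are already overcommutative varieties, so the witnesses $\beta,\gamma$ in $\Con(W_\lambda)$ correspond directly to varieties $\mathbf U,\mathbf W\in I_\lambda\subset\mathbb{OC}$, and the subdirect decomposition makes $\mathbf V\vee\mathbf U=\mathbf V\vee\mathbf W$, $\mathbf V\wedge\mathbf U=\mathbf V\wedge\mathbf W$ automatic. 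Second, your construction (``pick $y\notin C$ and identify $y$ with two different elements of $C$'') tacitly assumes $W_\lambda$ has more than one orbit; when $\lambda=(1,\dots,1)$ the $S_\lambda$-set $W_\lambda$ is transitive and the paper uses a separate argument, namely $\Con(W_\lambda)\cong\Sub(S_m)$ together with Lemma~\ref{canc in S_n}. In the non-transitive case the paper's Proposition~\ref{canc in G-set} is more delicate than a single ad hoc pair $(\beta,\gamma)$: it proceeds in three steps (show $\alpha$ is greedy; then use Je\v{z}ek's description of modular elements of $\Eq(\Orb(A))$ to force $\alpha^\star$ trivial; then exploit isomorphic orbits to kill $\alpha$ on each orbit). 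Third, the exceptional partition $(2,1)$ you flag is irrelevant here: the parameter $\delta$ enters only the explicit list in item~c) of Theorem~\ref{main}, not the equivalence cancellable$\Leftrightarrow$greedy.
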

 
\section{$G$-sets}
\label{G-set}

Let $G$ be a group that acts on a set $A$. If $g\in G$ and $x\in A$ then we denote by $g(x)$ the image of $x$ under the action of $g$. An algebra with the carrier $A$ and the set of unary operations $G$ is called a $G$-\emph{set}. A preliminary information on $G$-sets and, in particular, on their congruences, may be found in the monograph~\cite{McKenzie-McNulty-Taylor-87}. 

A $G$-set $A$ is said to be \emph{transitive} if, for any two elements $x,y\in A$, there is an element $g\in G$ such that $g(x)=y$. A transitive $G$-subset of a $G$-set $A$ is called an \emph{orbit} of $A$. Clearly, any $G$-set is a disjoint union of all its orbits. The set of all orbits of a $G$-set $A$ is denoted by $\Orb(A)$. As usual, the congruence lattice on $A$ is denoted by $\Con(A)$, and the equivalence lattice on the set $X$ by $\Eq(X)$. 

Let $\alpha\in\Con(A)$ and $B$ and $C$ be distinct orbits in $A$. We say that $\alpha$ \emph{isolates} $B$ if $x\in B$ and $x\,\alpha\,y$ imply $y\in B$; $\alpha$ \emph{connects} $B$ and $C$ if $x\,\alpha\,y$ for some $x\in B$ and $y\in C$; $\alpha$ \emph{collapses} orbits $B$ and $C$ [an orbit $B$] if $x\,\alpha\,y$ for all $x,y\in B\cup C$ [respectively, all $x,y\in B$]. A congruence $\alpha$ is said to be \emph{greedy} if it collapses any pair of orbits it connects. Denote by $\GCon(A)$ the set of all greedy congruences of a $G$-set $A$. In~\cite{Vernikov-97}, it is shown that $\GCon(A)$ is a sublattice of $\Con(A)$ and the structure of this sublattice is characterized. Let us formulate these results in the form used below. For any congruence $\alpha$ on $A$ we introduce a binary relation $\alpha^\star$ on $\Orb(A)$ by the following rule: if $B,C\in\Orb(A)$, then $B\,\alpha^\star\,C$ if and only if either $B=C$ or $\alpha$ connects $B$ and $C$. Obviously, $\alpha^\star$ is an equivalence relation on $\Orb(A)$.

\begin{lemma}[\mdseries{\cite[Lemma~1.1 and Proposition~1.2]{Vernikov-97}}]
\label{GCon}
Let $A$ be a $G$-set and $\Orb(A)=\{A_i\mid i\in I\}$. The set $\GCon(A)$ is a sublattice of the lattice $\Con(A)$. The map $f$ from $\GCon(A)$ into $\Eq(\Orb(A))\times\prod_{i\in I}\Con(A_i)$ given by the rule
$$
f(\alpha)=(\alpha^\star;\dots,\alpha_i,\dots)
$$
where $\alpha_i$ is the restriction of the congruence $\alpha\in\GCon(A)$ to the orbit $A_i$ is an isomorphic embedding.\qed
\end{lemma}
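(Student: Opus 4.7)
The plan is to establish four things in order: (i) the map $f$ is well-defined, i.e.\ each restriction $\alpha_i$ is a congruence on the $G$-set $A_i$ and $\alpha^{\star}$ is an equivalence relation on $\Orb(A)$; (ii) $f$ is injective on $\GCon(A)$; (iii) $\GCon(A)$ is closed under the meet and join operations of $\Con(A)$; and (iv) $f$ preserves those operations. Well-definedness of the $\alpha_i$ part is immediate since orbits are $G$-invariant. For $\alpha^{\star}$, reflexivity and symmetry are built into the definition; transitivity across three distinct orbits $B,C,D$ with $x\,\alpha\,y_1$ ($x\in B$, $y_1\in C$) and $y_2\,\alpha\,z$ ($y_2\in C$, $z\in D$) follows because greediness forces $\alpha$ to collapse both $B\cup C$ and $C\cup D$, and then $x\,\alpha\,y_2\,\alpha\,z$. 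Injectivity is then clean: if $f(\alpha)=f(\beta)$ and $x\,\alpha\,y$, then either $x,y$ lie in a common orbit $A_i$ and $\alpha_i=\beta_i$ gives $x\,\beta\,y$ directly, or they lie in distinct orbits $A_i,A_j$, in which case $A_i\,\alpha^{\star}\,A_j=\beta^{\star}\,A_j$ forces $\beta$ to connect and hence (by greediness) to collapse $A_i\cup A_j$, again giving $x\,\beta\,y$.

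For closure under meets, if $\alpha\wedge\beta$ connects distinct orbits $B$ and $C$ through some pair, then $\alpha$ and $\beta$ separately connect them and hence collapse $B\cup C$; the intersection of two universal relations on $B\cup C$ is again universal there, so $\alpha\wedge\beta$ collapses $B\cup C$. The join case is the delicate one: I will show that any $(\alpha\cup\beta)$-chain $x=z_0,z_1,\dots,z_n=y$ has the property that each cross-orbit step $z_k,z_{k+1}$ (lying in different orbits $D_k,D_{k+1}$) is taken by $\alpha$ or $\beta$ and therefore, by greediness of that congruence, is witnessed by a full collapse of $D_k\cup D_{k+1}$. Consequently $\alpha\vee\beta$ collapses the entire union $\bigcup_k D_k$, and in particular the orbits $B$ and $C$ at the ends of any connecting chain.

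The same chain analysis is the engine for homomorphism. The identity $(\alpha\wedge\beta)_i=\alpha_i\wedge\beta_i$ is purely formal, and $(\alpha\wedge\beta)^{\star}=\alpha^{\star}\wedge\beta^{\star}$ follows from (iii): the $\subseteq$ direction is immediate, while $\supseteq$ uses that if both $\alpha^{\star}$ and $\beta^{\star}$ relate $B$ to $C$ then both $\alpha,\beta$ collapse $B\cup C$ and so does the meet. For joins, the chain argument gives $(\alpha\vee\beta)^{\star}=\alpha^{\star}\vee\beta^{\star}$. The orbit-restriction case splits: if $A_i$ is a singleton class of $\alpha^{\star}\vee\beta^{\star}$, then no $(\alpha\cup\beta)$-chain starting in $A_i$ can ever escape $A_i$, so $(\alpha\vee\beta)_i=\alpha_i\vee\beta_i$; otherwise some $A_j\neq A_i$ lies in the same class, a neighbouring edge in the witnessing chain of orbits is contributed by $\alpha$ or $\beta$ and collapses $A_i$ by greediness, so $\alpha_i\vee\beta_i$ is already the universal relation on $A_i$, as is $(\alpha\vee\beta)_i$.

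The main obstacle is the join step, because it bundles together three interlocking tasks: proving that $\alpha\vee\beta$ is greedy (so that $f$ even lands in $\GCon(A)$ on the join), computing $(\alpha\vee\beta)^{\star}$ exactly as $\alpha^{\star}\vee\beta^{\star}$, and handling the orbit-by-orbit restrictions where the behaviour is genuinely different for trivial vs.\ nontrivial classes. All three reduce to the same chain-decomposition observation, so the cleanest write-up isolates that observation as a single lemma and then applies it uniformly; the remaining items (well-definedness, injectivity, the meet clauses) are routine consequences of unfolding the greediness definition.
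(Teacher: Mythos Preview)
The paper does not prove this lemma at all: it is quoted verbatim from \cite[Lemma~1.1 and Proposition~1.2]{Vernikov-97} and closed immediately with a \qed. So there is no ``paper's own proof'' to compare against; you have supplied a self-contained argument where the authors simply cite the literature.

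Your argument is correct. The chain analysis for the join is the real content, and your case split on whether $A_i$ is a singleton class of $\alpha^\star\vee\beta^\star$ handles the only subtle point (that $(\alpha\vee\beta)|_{A_i}$ might a priori be larger than $\alpha_i\vee\beta_i$ because witnessing chains could leave $A_i$). One small remark: you invoke greediness to prove transitivity of $\alpha^\star$, but in fact $\alpha^\star$ is an equivalence for \emph{any} congruence $\alpha$ on a $G$-set. If $b\,\alpha\,c$ with $b\in B$, $c\in C$ and $c'\,\alpha\,d$ with $c'\in C$, $d\in D$, pick $g\in G$ with $g(c)=c'$; then $g(b)\in B$ and $g(b)\,\alpha\,g(c)=c'\,\alpha\,d$. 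The paper actually relies on this stronger fact (it applies $\alpha^\star$ to non-greedy congruences in the proof of Proposition~\ref{canc in G-set}), so it is worth knowing, though for the lemma as stated your version suffices.
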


If $A$ is a $G$-set and $a\in A$ then we put
$$
\Stab_A(a)=\{g\in G\mid g(a)=a\}.
$$
It is clear that $\Stab_A(a)$ is a subgroup of $G$. It is called the \emph{stabilizer} of the element $a$ in $A$. Let $B$ and $C$ be two distinct orbits of a $G$-set $A$, $b\in B$ and $c\in C$. We denote by $\rho_{b,c}$ the binary relation on $A$ given by the following rule: $x\,\rho_{b,c}\,y$ if and only if either $x=y$ or $\{x,y\}=\{g(b),g(c)\}$ for some $g\in G$.

\begin{lemma}[\mdseries{\cite[Lemma~3]{Vernikov-01}}]
\label{rho is congruence}
If $\Stab_A(b)=\Stab_A(c)$ then $\rho_{b,c}$ is a congruence on $A$.\qed
\end{lemma}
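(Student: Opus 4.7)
The plan is to verify that $\rho_{b,c}$ is an equivalence relation on $A$ and is compatible with every $g \in G$. Reflexivity and symmetry are immediate from the definition, since $\{x,y\} = \{y,x\}$. Compatibility is also straightforward: if $x \neq y$ and $\{x,y\} = \{h(b), h(c)\}$ for some $h \in G$, then for any $g \in G$ we have $\{g(x), g(y)\} = \{(gh)(b), (gh)(c)\}$, so $g(x)\,\rho_{b,c}\,g(y)$; the case $x=y$ is trivial.

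The only nontrivial part is transitivity, and this is where the hypothesis $\Stab_A(b) = \Stab_A(c)$ enters. Suppose $x\,\rho_{b,c}\,y$ and $y\,\rho_{b,c}\,z$; we may assume $x \neq y$ and $y \neq z$, so that $\{x,y\} = \{g(b), g(c)\}$ and $\{y,z\} = \{h(b), h(c)\}$ for some $g, h \in G$. Since $b \in B$ and $c \in C$ lie in distinct orbits, the orbit decomposition gives $g(b), h(b) \in B$ and $g(c), h(c) \in C$ with $B \cap C = \varnothing$. Hence $y$ cannot simultaneously equal an element of $B$ and an element of $C$, which rules out the ``crossed'' cases and forces either $g(b) = y = h(b)$ or $g(c) = y = h(c)$.

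In the first case, $g^{-1}h \in \Stab_A(b) = \Stab_A(c)$, so $g(c) = h(c)$, which yields $x = g(c) = h(c) = z$. The second case is symmetric and again gives $x = z$. Hence $x\,\rho_{b,c}\,z$ by reflexivity, completing the proof of transitivity.

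The key step (and the only one demanding real work) is the transitivity argument; everything else follows directly from the definition. The role of the stabilizer hypothesis is precisely to guarantee that if two group elements agree on one of $b, c$, they also agree on the other, which prevents the equivalence classes of $\rho_{b,c}$ from being ``forced'' to grow beyond the two-element pairs $\{g(b), g(c)\}$.
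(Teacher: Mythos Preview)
Your proof is correct. The paper itself does not supply a proof of this lemma: it is stated with a citation to \cite[Lemma~3]{Vernikov-01} and closed immediately with \qed, so there is no in-paper argument to compare against. Your verification of reflexivity, symmetry, and compatibility with the $G$-action is straightforward, and your handling of transitivity is the standard one: the fact that $b$ and $c$ lie in distinct orbits forces the shared element $y$ to match either the $b$-images or the $c$-images in both pairs, and then the hypothesis $\Stab_A(b)=\Stab_A(c)$ collapses $x$ and $z$. This is exactly the argument one would expect and matches the proof given in the cited source.
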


The following assertion follows from the well-known group-theoretical fact (see~\cite[the claim~(1) of the statement~(5.9)]{Aschbacher-00}, for instance).

\begin{lemma}
\label{isomorphic orbits}
If $A$ is a non-transitive $G$-set and $\Stab_A(x)=\Stab_A(y)$ for any $x,y\in A$ then any two distinct orbits of $A$ are isomorphic.\qed
\end{lemma}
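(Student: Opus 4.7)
The plan is to apply the orbit--stabilizer correspondence directly. Fix two distinct orbits $B$ and $C$ of $A$ with representatives $b \in B$ and $c \in C$. Put $N = \Stab_A(b)$; by hypothesis this equals $\Stab_A(c)$, and in fact $N = \Stab_A(x)$ for every $x \in A$. Since $B$ and $C$ are orbits, every element of $B$ has the form $g(b)$ and every element of $C$ has the form $g(c)$ for some $g\in G$. I would then define a map $\phi\colon B\to C$ by the rule $\phi(g(b)) = g(c)$.

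The key verification is well-definedness. If $g(b) = h(b)$ for some $g,h \in G$, then $h^{-1}g \in \Stab_A(b) = N = \Stab_A(c)$, whence $h^{-1}g(c) = c$, i.e., $g(c) = h(c)$. Thus $\phi$ is well-defined (and by the same argument, injective). By the symmetric argument, the rule $\psi(g(c)) = g(b)$ defines a map $\psi\colon C\to B$, and $\phi,\psi$ are mutually inverse, so $\phi$ is a bijection.

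Finally, I would check $G$-equivariance: for any $h \in G$ and any $g \in G$,
$$
\phi\bigl(h(g(b))\bigr) = \phi\bigl((hg)(b)\bigr) = (hg)(c) = h\bigl(\phi(g(b))\bigr),
$$
so $\phi\colon B\to C$ is an isomorphism of $G$-sets. The non-transitivity hypothesis is used only to guarantee that there exist two distinct orbits to compare; it plays no role in the construction itself. The argument presents no genuine obstacle --- the only point where the hypothesis $\Stab_A(x) = \Stab_A(y)$ across distinct orbits is essential is precisely in ensuring that the naive assignment $g(b)\mapsto g(c)$ is coherent, which is exactly the step where the common stabilizer makes the two coset spaces $G/\Stab_A(b)$ and $G/\Stab_A(c)$ coincide.
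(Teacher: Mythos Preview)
Your proof is correct. The paper itself does not give an argument for this lemma but simply cites the standard orbit--stabilizer fact from Aschbacher's \emph{Finite Group Theory} (statement~(5.9)(1)); your argument is exactly the routine verification of that cited fact, so the approaches coincide.
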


Note that lattices of equivalence relations are congruence lattices of some specific $G$-sets. Indeed, let $T=\{e\}$ be the singleton group and $S$ be a set. Then $S$ can be considered as a trivial $T$-set with the action of $T$ given by the rule $e(x)=x$ for any $x\in S$. Clearly, any equivalence relation on $S$ is the congruence of the $T$-set $S$, so the lattice $\Eq(S)$ is the congruence lattice of this $T$-set. If $S$ is a set then we denote by $\Delta_S$ the universal relation on $S$ and by $\nabla_S$ the equality relation on $S$. If $X$ is a non-empty subset of $S$ then we put $\rho_X=(X\times X)\cup\nabla_S$. Clearly, $\rho_X$ is an equivalence relation on $S$.

The following assertion plays the key role in the proof of Theorem~\ref{main}.

\begin{proposition}
\label{canc in G-set}
Let $A$ be a non-transitive $G$-set with $\Stab_A(x)=\Stab_A(y)$ for any $x,y\in A$. A congruence $\alpha$ on $A$ is a cancellable element of the lattice $\Con(A)$ if and only if $\alpha$ is either the universal relation or the equality relation on $A$.
\end{proposition}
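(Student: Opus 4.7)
The ``if'' direction is immediate since $\nabla_A$ and $\Delta_A$ are the bottom and top of $\Con(A)$, hence neutral and a fortiori cancellable. For the converse, given $\alpha\in\Con(A)\setminus\{\nabla_A,\Delta_A\}$, the plan is to produce distinct congruences $\beta,\gamma\in\Con(A)$ with $\alpha\vee\beta=\alpha\vee\gamma$ and $\alpha\wedge\beta=\alpha\wedge\gamma$. The test congruences will come from two families: the diagonal congruences $\rho_{b,c}$ supplied by Lemma~\ref{rho is congruence} (available for any $b,c$ in distinct orbits, by the common-stabilizer hypothesis), and the ``collapse-one-orbit'' congruences $\sigma_{A_i}$ that identify $A_i$ to a point and leave every other orbit discrete (these are congruences because $G$ preserves each orbit). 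Lemma~\ref{isomorphic orbits} guarantees that any two orbits are $G$-isomorphic, so the supply of $\rho_{b,c}$'s is plentiful.

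The argument will split according to the orbit-level equivalence $\alpha^\star$. First suppose $\alpha$ isolates every orbit; then $\alpha\neq\nabla_A$ forces some restriction $\alpha|_{A_1}$ to be non-trivial. Identifying $A_1$ with the regular $\bar G$-set, where $\bar G=G/H$ and $H$ is the common stabilizer, the subgroup $K\le\bar G$ corresponding to $\alpha|_{A_1}$ is non-trivial; right-translation by any non-identity element of $K$ then yields a $G$-automorphism $\sigma$ of $A_1$ that stabilises every $\alpha|_{A_1}$-class setwise. After fixing a $G$-isomorphism $\phi\colon A_1\to A_2$ to a second orbit, the choice $\beta=\rho_{b,\phi(b)}$ and $\gamma=\rho_{b,\phi(\sigma(b))}$ (for any $b\in A_1$) will do: they are distinct, their meets with $\alpha$ are $\nabla_A$ (because $\alpha$ has no non-diagonal inter-orbit pairs), and their joins with $\alpha$ coincide, since each equivalence class of either join has the form $S\sqcup\phi(S)$ for a suitable block $S$ on $A_1$ and $\sigma(S)=S$ forces $\phi(\sigma(S))=\phi(S)$.

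If instead $\alpha^\star$ is non-trivial but not universal, there is an orbit $D$ outside the $\alpha^\star$-class of some pair $B,C$ that $\alpha$ connects; picking $b\in B,c\in C$ with $b\,\alpha\,c$ and any $d\in D$, the choice $\beta=\rho_{b,d}$, $\gamma=\rho_{c,d}$ will serve: both meets with $\alpha$ are $\nabla_A$ because $\alpha$ never connects to $D$, and both joins merge $\{g(b),g(c),g(d)\}$ into a single block for every $g\in G$, producing the same congruence.

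The remaining case, when $\alpha^\star$ is universal on $\Orb(A)$, is the main obstacle. Here $\alpha\neq\Delta_A$ forces $\alpha$ to fail to be greedy. If every restriction $\alpha|_{A_i}$ is $\nabla_{A_i}$, then $\beta=\sigma_{A_i}$ and $\gamma=\sigma_{A_j}$ for two distinct orbits $A_i,A_j$ will work: the meets with $\alpha$ are both $\nabla_A$ and the joins are both $\Delta_A$, because collapsing any single orbit propagates through $\alpha$'s universal inter-orbit connections. Otherwise, a structural analysis of how $\alpha$ glues two connected orbits will show that every $\alpha|_{A_i}$ corresponds to a common non-trivial subgroup $K\le\bar G$; the plan is then to use ``skew'' diagonals $\rho_{\phi_c},\rho_{\phi_{c'}}$ for two distinct elements $c,c'$ lying in a common $K$-coset inside $\bar G\setminus K$ (available because $|K|\ge 2$), where $\phi_c$ is a suitable shift of a canonical connecting isomorphism supplied by $\alpha$. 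Their non-diagonal pairs then lie outside $\alpha$ (giving meets $\nabla_A$), and since $c$ and $c'$ belong to a common $K$-coset they merge the same $\alpha$-classes (giving equal joins). The structural lemma that forces the uniform $K$ and the class-level verification of the equal joins are the delicate points that I anticipate will require the most care.
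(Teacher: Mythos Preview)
Your outline is correct and complete, though it follows a genuinely different route from the paper.

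The paper proceeds in three steps: (1) first prove that any cancellable $\alpha$ is \emph{greedy}, using ad hoc test congruences that modify $\alpha$ on a pair of orbits it connects but fails to collapse; (2) then, for greedy $\alpha$, invoke the embedding of $\GCon(A)$ into $\Eq(\Orb(A))\times\prod_i\Con(A_i)$ (Lemma~\ref{GCon}) together with Je\v{z}ek's classification of modular elements in partition lattices to force $\alpha^\star\in\{\nabla_{\Orb(A)},\Delta_{\Orb(A)}\}$, with a further explicit construction to exclude $\rho_N$ for $1<|N|<|\Orb(A)|$; (3) finally, in the isolating case, use the diagonal congruences $\rho_{x,\varphi(x)}$ and $\rho_{x,\varphi(y)}$---essentially identical to your Case~1.

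You instead split directly on $\alpha^\star$. Your Case~1 coincides with the paper's step~3. Your Case~2 (when $\alpha^\star$ is properly between $\nabla$ and $\Delta$) replaces the paper's detour through greediness, Lemma~\ref{GCon}, and Je\v{z}ek by the single elementary observation that for an orbit $D$ outside the $\alpha^\star$-class of a connected pair $B,C$, the diagonals $\rho_{b,d}$ and $\rho_{c,d}$ already witness non-cancellability. Your Case~3 handles $\alpha^\star=\Delta_{\Orb(A)}$ with $\alpha\ne\Delta_A$ via the uniform-subgroup lemma (which is easy: if $b\,\alpha\,c$ with $b\in A_i,c\in A_j$, then the subgroup $K_i$ determining $\alpha|_{A_i}$ satisfies $K_i\subseteq K_j$ by transporting through $c$, and symmetrically $K_j\subseteq K_i$); then $K$ trivial gives the orbit-collapse witnesses $\sigma_{A_i},\sigma_{A_j}$, while $K$ proper non-trivial gives the skew diagonals. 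All of these verifications go through as you sketch.

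What you gain is self-containment: you use only Lemmas~\ref{rho is congruence} and~\ref{isomorphic orbits} and avoid Lemma~\ref{GCon} and the external reference to~\cite{Jezek-81}. What the paper gains is a cleaner conceptual division (greedy first, then orbit-level, then within-orbit) that makes the role of $\GCon(A)$ explicit; your Case~3 structural lemma is doing by hand what the paper extracts from that machinery. One cosmetic point: in Case~1 your phrase ``each equivalence class of either join has the form $S\sqcup\phi(S)$'' overstates things (other orbits and $\alpha|_{A_2}$ may intervene), but the actual argument---$\sigma$ fixes each $\alpha|_{A_1}$-class, so $\beta\subseteq\alpha\vee\gamma$ and $\gamma\subseteq\alpha\vee\beta$---is exactly the paper's, and is sound.
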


\begin{proof}
\emph{Sufficiency} is evident. One can prove \emph{necessity}. We divide the proof into three parts.

\smallskip

1) Here we prove that the congruence $\alpha$ is greedy. Arguing by contradiction, suppose that $\alpha$ connects but not collapses orbits $B$ and $C$ of $A$. Then there are $b\in B$ and $c\in C$ with $b\,\alpha\,c$. Let us define binary relations $\beta$ and $\gamma$ on $A$ by the following way: $x\,\beta\,y$ if and only if one of the following holds:
\begin{itemize}
\item[a)] $x,y\in B$,
\item[b)] $x,y\in C$ and $x\,\alpha\,y$,
\item[c)] $x,y\notin B\cup C$ and $x\,\alpha\,y$;
\end{itemize}
$x\,\gamma\,y$ if and only if one of the following holds:
\begin{itemize}
\item[a)] $x,y\in B$ and $x\,\alpha\,y$,
\item[b)] $x,y\in C$,
\item[c)] $x,y\notin B\cup C$ and $x\,\alpha\,y$.
\end{itemize}
It is evident that $\beta$ and $\gamma$ are congruences on $A$. Suppose that $\alpha$ collapses $B$. Let $x\in B$ and $y\in C$. There is an element $g\in G$ with $y=g(c)$. Then $x\,\alpha\,g(b)\,\alpha\,g(c)=y$, whence $x\,\alpha\,y$. Furthermore, let $x,y\in C$. Then $x=g(c)$ and $y=h(c)$ for some $g,h\in G$. Therefore,
$$
x=g(c)\,\alpha\,g(b)\,\alpha\,h(b)\alpha\,h(c)=y,
$$
whence $x\,\alpha\,y$ again. We see that $\alpha$ collapses $B$ and $C$, contradicting the choice of $\alpha$. Therefore, $\alpha$ does not collapse $B$. This implies that $\beta|_B\ne\gamma|_B$, whence $\beta\ne\gamma$. Furthermore, it is evident that $\alpha\wedge\beta=\alpha\wedge\gamma=\delta$ where $\delta$ is the congruence on $A$ defined by the following way: $x\,\delta\,y$ if and only if one of the following holds:
\begin{itemize}
\item[a)] $x,y\in B$ and $x\,\alpha\,y$,
\item[b)] $x,y\in C$ and $x\,\alpha\,y$
\item[c)] $x,y\notin B\cup C$ and $x\,\alpha\,y$.
\end{itemize}

Now we aim to verify that $\alpha\vee\beta=\alpha\vee\gamma=\alpha\vee\rho_{B\cup C}$. Suppose that $x\in B\cup C$. If $x\in B$ then $b\,\beta\,x$ by the definition of $\beta$. If $x\in C$ then there is $g\in G$ such that $x=g(c)$, so $b\,\beta\,g(b)\,\alpha\,g(c)=x$. In any case, $(b,x)\in\alpha\vee\beta$. We see that $B\cup C$ is contained in the $(\alpha\vee\beta)$-class of the element $b$. Hence $\rho_{B\cup C}\subseteq\alpha\vee\beta$. Hence $\alpha\vee\rho_{B\cup C}\subseteq\alpha\vee\beta$. The inverse inclusion is obvious. Hence $\alpha\vee\beta=\alpha\vee\rho_{B\cup C}$. The equality $\alpha\vee\gamma=\alpha\vee\rho_{B\cup C}$ can be verified analogously.

We see that $\alpha\vee\beta=\alpha\vee\gamma$, $\alpha\wedge\beta=\alpha\wedge\gamma$ and $\beta\ne\gamma$, contradicting the claim that $\alpha$ is cancellable. Thus, we have proved that the congruence $\alpha$ is greedy.

\smallskip

2) Now we prove that if $\alpha\ne\Delta_A$ then $\alpha$ isolates each orbit of $A$. Indeed, we prove above that $\alpha\in\GCon(A)$. Let $\alpha^\star$ be the equivalence relation on the set $\Orb(A)$ defined before Lemma~\ref{GCon}. Since each component of a modular element in a subdirect product is modular, Lemma~\ref{GCon} implies that $\alpha^\star$ is a modular element of the lattice $\Eq(\Orb(A))$. According to \cite[Proposition~2.2]{Jezek-81}, this implies that $\alpha^\star=\rho_N$ for some subset $N$ of $\Orb(A)$. Suppose that $\alpha^\star$ differs from $\nabla_{\Orb(A)}$ and $\Delta_{\Orb(A)}$. Then $1<|N|<|\Orb(A)|$. Let $X,Y\in N$ and $X\ne Y$. Put $M=\Orb(A)\setminus N$. Consider the sets $B=\overline M\cup X$ and $C=\overline M\cup Y$ where $\overline M$ is the join of all orbits from $M$ and the congruences $\beta=\rho_B\vee\rho_Y$ and $\gamma=\rho_C\vee\rho_X$. It is evident that $\beta\ne\gamma$. 

Let us check that $\alpha\vee\beta=\alpha\vee\gamma=\Delta_A$. Fix an element $x\in X$. Note that $(x,z)\in\alpha\vee\beta$ for any $z\in A$. Indeed, if $z\in B$ then $(x,z)\in\rho_B$. If $z\not\in B$ then $z$ lies in some orbit $Z$ from $N$. Since $(X,Z)\in\rho_N=\alpha^\star$, we have $x'\,\alpha\, z'$ for some $x'\in X$ and $z'\in Z$. We have $x'=g(x)$ and $z'=h(z)$ for some $g,h\in G$. Hence
$$z=h^{-1}(z')\,\alpha\, h^{-1}(x')=h^{-1}g(x)\,\beta\, x.$$
We have checked that $\alpha\vee\beta=\Delta_A$. The equality $\alpha\vee\gamma=\Delta_A$ is analogous.

Now we will check that $\alpha\wedge\beta=\alpha\wedge\gamma$. Since $\alpha$ isolates each orbit from $M$ and $\beta$ isolates each orbit from $N\setminus X$, the congruence $\alpha\wedge\beta$ isolates each orbit from $M\cup(N\setminus X)$, that is, from $\Orb(A)\setminus X$. Since a congruence cannot isolate all orbits but one, the congruence $\alpha\wedge\beta$ isolates all orbits. Hence $(\alpha\wedge\beta)^\star=\nabla_{\Orb(A)}$. The same argument shows that $(\alpha\wedge\gamma)^\star=\nabla_{\Orb(A)}$. It is evident that the restriction of each of the congruences $\alpha\wedge\beta$ and $\alpha\wedge\gamma$ on any orbit from $M\cup X\cup Y$ coincides with the restriction of $\alpha$ on the same orbit. Furthermore, the restriction of $\alpha\wedge\beta$ or $\alpha\wedge\gamma$ on any orbit from $\Orb(A)\setminus(M\cup X\cup Y)$ is trivial. Hence, by Lemma~\ref{GCon}, we have $\alpha\wedge\beta=\alpha\wedge\gamma$.
This contradicts the cancellability of $\alpha$.

We have proved that $\alpha^\star=\nabla_{\Orb(A)}$ or $\alpha^\star=\Delta_{\Orb(A)}$. Since $\alpha$ is greedy, $\alpha^\star=\Delta_{\Orb(A)}$ implies that $\alpha=\Delta_A$, which is not the case. Therefore, $\alpha^\star=\nabla_{\Orb(A)}$. This means exactly that $\alpha$ isolates each orbit.

\smallskip 

3) Now we are ready to complete the proof. Let $\alpha\ne\Delta_A$. We need to check that $\alpha=\nabla_A$. In view of what has been checked in the previous paragraph, it suffices to verify that $\alpha|_B=\nabla_B$ for any orbit $B$ of $A$. Arguing by contradiction, suppose that $x\,\alpha\,y$ for some distinct elements $x,y\in B$. Let $C$ be an orbit of $A$ with $B\ne C$. According to Lemma~\ref{isomorphic orbits}, there is an isomorphism $\varphi$ from $B$ onto $C$. Put $\beta=\rho_{x,\varphi(x)}$ and $\gamma=\rho_{x,\varphi(y)}$. Lemma~\ref{rho is congruence} shows that $\beta$ and $\gamma$ are congruences on $A$. Clearly, the restriction of each of the congruences $\beta$ and $\gamma$ on any orbit of $A$ is the equality relation on this orbit. Since $\alpha$ isolates any orbit of $A$, this implies that $\alpha\wedge\beta=\alpha\wedge\gamma=\nabla_A$. 

Now we are going to prove that $\alpha\vee\beta=\alpha\vee\gamma$. Since $x,y\in B$, we have $y=g(x)$ for some $g\in G$. Furthermore, $x\,\alpha\, g(x)$ implies $g^{-1}(x)\,\alpha\, x$. Hence
$$x\,\alpha\,g^{-1}(x)\,\gamma\,g^{-1}(\varphi(y))=g^{-1}(\varphi(g(x)))=\varphi(x).$$
So we have $(x,\varphi(x))\in\alpha\vee\gamma$. Since the congruence $\rho_{x,\varphi(x)}$ is generated by the pair $(x,\varphi(x))$, this implies that $\beta\subseteq\alpha\vee\gamma$ whence $\alpha\vee\beta\subseteq\alpha\vee\gamma$. Furthermore,
$$x\,\alpha\,g(x)\,\beta\, g(\varphi(x))=\varphi(g(x))=\varphi(y).$$
So we have $(x,\varphi(y))\in\alpha\vee\beta$. Hence $\gamma\subseteq\alpha\vee\beta$ and $\alpha\vee\gamma\subseteq\alpha\vee\beta$. Hence $\alpha\vee\beta=\alpha\vee\gamma$.

We have verified that $\alpha\wedge\beta=\alpha\wedge\gamma$ and $\alpha\vee\beta=\alpha\vee\gamma$. It is evident that $\beta\ne\gamma$. This contradicts the choice of $\alpha$ as a cancellable element of $\Con(A)$.
\end{proof}

We denote by $S_n$ the full symmetric group on the set $\{1,2,\dots,n\}$. The subgroup lattice of a group $G$ is denoted by $\Sub(G)$. We need also the following

\begin{lemma}[\mdseries{\cite[Lemma~2.3]{Shaprynskii-Skokov-Vernikov-a}}]
\label{canc in S_n}
Let $n$ be a natural number. A subgroup $H$ of the group $S_n$ is a cancellable element of the lattice $\Sub(S_n)$ if and only if either $H=T$ or $H=S_n$.\qed
\end{lemma}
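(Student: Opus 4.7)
The sufficiency is immediate: $T=\{e\}$ is the bottom element of $\Sub(S_n)$ and $S_n$ is the top, and the bottom and top of any lattice are trivially cancellable (indeed, neutral). Only the necessity direction requires genuine work. Given $H\le S_n$ with $T\ne H\ne S_n$, the task is to exhibit two distinct subgroups $K_1,K_2\le S_n$ with $H\vee K_1=H\vee K_2$ and $H\wedge K_1=H\wedge K_2$. Note that $n\ge 3$, since otherwise no such $H$ exists.

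My plan is to produce $K_1,K_2$ as a pair of $H$-conjugate cyclic subgroups. Pick $g\in S_n$, set $K_1=\langle g\rangle$, and take $K_2=h\langle g\rangle h^{-1}$ for a suitably chosen $h\in H$. Since $h$ normalises $H$ (trivially, as $h\in H$) and also normalises $H\vee K_1$ (since $h\in H\le H\vee K_1$), we immediately get $H\vee K_2=H\vee K_1$ and $H\wedge K_2=h(H\wedge K_1)h^{-1}$. The meets therefore coincide as soon as $H\wedge K_1$ is $h$-invariant---for example, trivial---while $K_1\ne K_2$ is equivalent to $h\notin N_{S_n}(\langle g\rangle)$. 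The natural choice is $g=(i\;j)$ for a transposition with $(i\;j)\notin H$: then the intersection $\langle g\rangle\wedge H$ is automatically trivial, and it remains only to find $h\in H$ that does not stabilise $\{i,j\}$ as a set, in which case $h(i\;j)h^{-1}=(h(i)\;h(j))$ generates a different order-two subgroup.

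When this conjugation trick degenerates---essentially when $H$ is normal in $S_n$, so $H=A_n$, or $H$ is the Klein four group inside $S_4$---I would instead take $K_1,K_2$ to be two distinct transposition subgroups, each of which satisfies $K_i\wedge H=T$ and $K_i\vee H=S_n$ and so has identical meet and join with $H$. The main obstacle I foresee is organising the case analysis so that one of the two constructions succeeds uniformly for every proper non-trivial $H$; in particular, one must rule out the pathological situation in which every transposition outside $H$ has its two-point support stabilised setwise by the whole of $H$, and verify that this only occurs at the lattice extremes $T$ and $S_n$.
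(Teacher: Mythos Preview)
The paper does not actually prove this lemma: it is quoted from \cite{Shaprynskii-Skokov-Vernikov-a} and closed with a \qed, so there is no in-paper argument to compare against. I can therefore only comment on the soundness of your sketch.

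Your conjugation idea is correct and in fact works \emph{uniformly}; the fallback for normal $H$ is unnecessary, and your stated ``main obstacle'' dissolves with a two-line argument. Suppose that for every transposition $(i\;j)\notin H$ the whole of $H$ stabilises $\{i,j\}$ setwise. If $H$ contains no transposition at all, then $H$ stabilises \emph{every} $2$-subset of $\{1,\dots,n\}$; for $n\ge 3$ this forces $H=T$. If instead $(1\;2)\in H$, then any transposition $(i\;j)\notin H$ must have $\{i,j\}$ fixed setwise by $(1\;2)$, hence disjoint from $\{1,2\}$; consequently every transposition meeting $\{1,2\}$ lies in $H$, and since $(1\;2),(1\;3),\dots,(1\;n)$ already generate $S_n$ we get $H=S_n$. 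Either conclusion contradicts $T\ne H\ne S_n$, so the pathological case never arises and your first construction always succeeds: pick $(i\;j)\notin H$, pick $h\in H$ with $\{h(i),h(j)\}\ne\{i,j\}$, set $K_1=\langle(i\;j)\rangle$ and $K_2=hK_1h^{-1}$.

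One small correction: your proposed fallback for the Klein four group $V_4\trianglelefteq S_4$ does not work as written, since $\langle(1\;2)\rangle\vee V_4$ is the dihedral group of order $8$, not $S_4$. This is harmless, because the main construction already handles $V_4$ (take $h=(1\;3)(2\;4)$, giving $K_1=\langle(1\;2)\rangle$ and $K_2=\langle(3\;4)\rangle$), but it shows that your instinct that normality breaks the conjugation trick was off: what matters is whether $H$ lies inside $N_{S_n}(\langle(i\;j)\rangle)$, not whether $H$ is normal in $S_n$.
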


\section{The structure of the lattice $\mathbb{OC}$}
\label{OC-structure}

For a positive integer $n$ with $n\ge 2$, we denote by $\mathbf C_n$ the variety of semigroups defined by all balanced identities of length $\ge n$. It is clear that
$$
\mathbf{COM}=\mathbf C_2\subset\mathbf C_3\subset\cdots\subset\mathbf C_n\subset\cdots\subset\mathbf{SEM}
$$
where $\mathbf{COM}$ stands for the variety of all commutative semigroups. Further, let $m$ be a positive integer with $2\le m\le n$. Denote by $\mathbf C_{n,m}$ the variety of semigroups defined by all balanced identities of length $>n$ and all balanced identities of length $n$ depending on $\le m$ letters. For notational convenience, we put also $\mathbf C_{n,1}=\mathbf C_{n+1}$. It is clear that
$$
\mathbf C_n=\mathbf C_{n,n}\subset\mathbf C_{n,n-1}\subset\cdots\subset\mathbf C_{n,2}\subset\mathbf C_{n,1}=\mathbf C_{n+1}.
$$
Finally, let $\lambda=(\lambda_1,\lambda_2,\dots,\lambda_m)$ be a partition of the number $n$ into $m$ parts. Denote by $\mathbf C_\lambda$ the subvariety of the variety $\mathbf C_{n,m-1}$ which is defined in $\mathbf C_{n,m-1}$ by all balanced identities $\mathbf{u\approx v}$ of length $n$ such that $\con(\mathbf u)=\{x_1,x_2,\dots,x_m\}$ and $\partition(\mathbf u)=\lambda$. It is clear that
$$
\mathbf C_{n,m}\subset\mathbf C_\lambda\subset\mathbf C_{n,m-1}.
$$
Denote by $I_\lambda$ the interval $[\mathbf C_\lambda,\mathbf C_{n,m-1}]$ of the lattice $\mathbb{OC}$. 

For a partition $\lambda=(\lambda_1,\lambda_2,\dots,\lambda_m)\in\Lambda_{n,m}$, we put
$$
S_\lambda=\{\sigma\in S_m\mid\lambda_i=\lambda_{\sigma(i)}\ \text{for}\ i=1,2,\dots,m\}.
$$
It is clear that $S_\lambda$ is a subgroup of $S_m$. For any word $\mathbf u\in W_\lambda$ and any permutation $\sigma\in S_\lambda$, let $\sigma(\mathbf u)$ be the word obtained from $\mathbf u$ by replacing each occurrence of a letter $x_i$ by $x_{\sigma(i)}$ for all $i=1,2,\dots,m$. The definition of the group $S_\lambda$ implies that $\sigma(\mathbf u)\in W_\lambda$. Obviously, $W_\lambda$ is an $S_\lambda$-set relatively to the just defined action of the group $S_\lambda$. 

\begin{proposition}[\!\!\mdseries{\cite[Propositions~2.2 and~3.1 and Theorem 4.1]{Volkov-94}}]
\label{OC-struct}
The following are true:
\begin{itemize}
\item[\textup{(i)}] the lattice of all overcommutative semigroup varieties is decomposed into a subdirect product of intervals of the form $I_\lambda$ where $\lambda$ runs over the set $\Lambda$;
\item[\textup{(ii)}] for any $\lambda\in\Lambda$, the interval $I_\lambda$ is anti-isomorphic to the congruence lattice of the $S_\lambda$-set $W_\lambda$.\qed
\end{itemize}
\end{proposition}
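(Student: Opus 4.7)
I would prove part (ii) first and then leverage its apparatus to obtain part (i). Fix $\lambda\in\Lambda_{n,m}$ and, for $\mathbf V\in I_\lambda$, define
$$
\Phi_\lambda(\mathbf V)=\{(\mathbf u,\mathbf v)\in W_\lambda\times W_\lambda\mid \mathbf V\ \text{satisfies}\ \mathbf u\approx\mathbf v\}.
$$
This is plainly an equivalence relation; it is also $S_\lambda$-invariant because varieties are closed under the substitution $x_i\mapsto x_{\sigma(i)}$, so $\Phi_\lambda(\mathbf V)\in\Con(W_\lambda)$. In the other direction, to a congruence $\alpha\in\Con(W_\lambda)$ I associate the variety $\Psi_\lambda(\alpha)$ axiomatised by the defining identities of $\mathbf C_{n,m-1}$ together with $\{\mathbf u\approx\mathbf v\mid(\mathbf u,\mathbf v)\in\alpha\}$; since $\mathbf C_\lambda$ satisfies every $\mathbf u\approx\mathbf v$ with $\mathbf u,\mathbf v\in W_\lambda$, one has $\Psi_\lambda(\alpha)\in I_\lambda$.

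That $\Phi_\lambda$ and $\Psi_\lambda$ are mutually inverse relies on the observation, recorded in Section~\ref{prel}, that every balanced identity of shape $(n,m,\lambda)$ can be replaced, modulo commutativity, by an identity with both sides in $W_\lambda$. Thus a variety in $I_\lambda$ is completely determined by the set of $W_\lambda$-pairs of identities it satisfies beyond the axioms of $\mathbf C_{n,m-1}$. Order-reversal is then automatic from $\mathbf V\subseteq\mathbf V'$ iff $\mathbf V'$ satisfies fewer identities. For the lattice anti-homomorphism, the join case is easy: $\mathbf V\vee\mathbf V'$ satisfies an identity iff both $\mathbf V$ and $\mathbf V'$ do, so $\Phi_\lambda(\mathbf V\vee\mathbf V')=\Phi_\lambda(\mathbf V)\cap\Phi_\lambda(\mathbf V')$, the meet in $\Con(W_\lambda)$. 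The meet case is the main obstacle: one must show $\Phi_\lambda(\mathbf V\wedge\mathbf V')=\Phi_\lambda(\mathbf V)\vee\Phi_\lambda(\mathbf V')$, which requires proving that whenever $\mathbf u\approx\mathbf v$ (with $\mathbf u,\mathbf v\in W_\lambda$) is an equational consequence of the identity sets of $\mathbf V$ and $\mathbf V'$, there is a derivation chain $\mathbf u=\mathbf u_0,\mathbf u_1,\dots,\mathbf u_k=\mathbf v$ with all $\mathbf u_i\in W_\lambda$ and each consecutive step either inside $\Phi_\lambda(\mathbf V)\cup\Phi_\lambda(\mathbf V')$ or obtained via the $S_\lambda$-action. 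This normal-form argument uses heavily that every identity in sight is balanced and that renaming variables preserves shape within $W_\lambda$.

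With part (ii) in hand, for part (i) I define $\pi_\lambda\colon\mathbb{OC}\to I_\lambda$ by the same formula $\pi_\lambda(\mathbf V)=\Psi_\lambda(\{(\mathbf u,\mathbf v)\in W_\lambda^2\mid\mathbf V\ \text{satisfies}\ \mathbf u\approx\mathbf v\})$ for arbitrary $\mathbf V\in\mathbb{OC}$. The product map $\Pi\colon\mathbf V\mapsto(\pi_\lambda(\mathbf V))_{\lambda\in\Lambda}$ embeds $\mathbb{OC}$ into $\prod_\lambda I_\lambda$: injectivity holds because any non-trivial balanced identity separating two overcommutative varieties has a unique shape $\lambda\in\Lambda$, so the corresponding coordinates of $\Pi$ differ; that $\Pi$ is a lattice embedding follows from the computation in the previous paragraph applied coordinate-wise. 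Each projection $\pi_\lambda$ is surjective onto $I_\lambda$ because $\pi_\lambda$ restricts to the identity on $I_\lambda$. This exhibits $\mathbb{OC}$ as a subdirect product of the intervals $I_\lambda$.
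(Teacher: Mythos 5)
First, a caveat: the paper does not actually prove Proposition~\ref{OC-struct} --- it is imported verbatim from \cite{Volkov-94} with a \textup{\qedsymbol} --- so there is no in-paper argument to measure your attempt against. Your Galois-style correspondence $\Phi_\lambda,\Psi_\lambda$ between $I_\lambda$ and $\Con(W_\lambda)$, and the reduction of part (i) to part (ii) via the coordinate maps $\pi_\lambda$, is the natural route and, as far as one can reconstruct, essentially Volkov's.

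The genuine gap is that the one step carrying all of the mathematical content --- your ``normal-form argument'' needed for $\Phi_\lambda(\mathbf V\wedge\mathbf V')=\Phi_\lambda(\mathbf V)\vee\Phi_\lambda(\mathbf V')$, and equally for $\Phi_\lambda\circ\Psi_\lambda=\mathrm{id}$ and $\Psi_\lambda\circ\Phi_\lambda=\mathrm{id}$ --- is announced as ``the main obstacle'' but never carried out; everything else in the write-up is formal bookkeeping around trivial inclusions. The missing analysis of equational deductions does go through, and in fact more simply than your description suggests. For the meet: every identity of an overcommutative variety is balanced, and an elementary deduction step using a balanced identity preserves $\ell_j(\cdot)$ for every letter $x_j$; since $W_\lambda$ is exactly the set of words having the same multiplicity vector as $\mathbf u$, every word in a derivation chain starting at $\mathbf u\in W_\lambda$ stays in $W_\lambda$, and each consecutive pair is satisfied by $\mathbf V$ or by $\mathbf V'$, hence already lies in $\Phi_\lambda(\mathbf V)\cup\Phi_\lambda(\mathbf V')$ --- no auxiliary $S_\lambda$-moves are needed there, contrary to what you wrote. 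Where the $S_\lambda$-action genuinely enters is in $\Phi_\lambda(\Psi_\lambda(\alpha))=\alpha$: a length-and-content count shows that no defining identity of $\mathbf C_{n,m-1}$ can be applied to a word of length $n$ with $m$ letters, and that an $\alpha$-pair $(\mathbf s,\mathbf t)$ can only be applied as a whole-word, injective letter-renaming $\sigma$, which is then forced to lie in $S_\lambda$; it is precisely the $S_\lambda$-invariance of $\alpha$ (i.e.\ $\alpha$ being a congruence of the $S_\lambda$-set rather than a mere equivalence) that keeps $(\sigma(\mathbf s),\sigma(\mathbf t))$ inside $\alpha$. Finally, $\Psi_\lambda(\Phi_\lambda(\mathbf V))=\mathbf V$ requires the analogous (and also omitted) check that every identity of a variety $\mathbf V\in I_\lambda$ is a consequence of the identities of $\mathbf C_{n,m-1}$ together with the pairs in $\Phi_\lambda(\mathbf V)$. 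Until these derivation arguments are written out, the proposal is a correct plan rather than a proof.
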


We will denote by $\mathbf V_\lambda$ the image of a variety $\mathbf V$ in an interval $I_\lambda$ defined by Proposition~\ref{OC-struct}.

If $\mathbf u\in W_\lambda$ and $\sigma$ is a non-trivial permutation from $S_\lambda$ then $\sigma(\mathbf u)\ne\mathbf u$. This implies the following observation formulated for convenience of references.

\begin{remark}
\label{stab are trivial and coincide}
$\Stab_{W_\lambda}(\mathbf u)=T$ for each $\mathbf u\in W_\lambda$; therefore, $\Stab_{W_\lambda}(\mathbf u)=\Stab_{W_\lambda}(\mathbf v)$ for any $\mathbf u,\mathbf v\in W_\lambda$.\qed
\end{remark}

\section{Proof of Theorem \ref{main}}
\label{proof}

Here we aim to verify Theorem~\ref{main}. The equivalence~b)\,$\leftrightarrow$\,c) is verified in~\cite[Proposition~2.4]{Shaprynskii-Vernikov-11}. The implication~b)\,$\rightarrow$\,a) follows from Proposition~\ref{neutral etc are greedy} and the fact that a neutral element of a lattice is cancellable. It remains to check the implication~a)\,$\rightarrow$\,b). To achieve this goal, we use the same arguments as in the proof of~\cite[Theorem~2]{Vernikov-01}. For reader convenience and in the sake of completeness, we reproduce these arguments here without references to~\cite{Vernikov-01}.

Let $\mathbf V$ be an overcommutative variety of semigroups which is a cancellable element of the lattice $\mathbb{OC}$. We need to verify that $\mathbf V$ is greedy. Denote by $\nu$ the fully invariant congruence on a semigroup $F$ corresponding to the variety $\mathbf V$. It is clear that the congruence $\nu$ is subcommutative, i.e., it is contained in the fully invariant congruence on $F$ that corresponds to the variety $\mathbf{COM}$. Denote by $\mathbb{SC}$ the lattice of all subcommutative fully invariant congruences on $F$. It is clear that this lattice is anti-isomorphic to the lattice $\mathbb{OC}$. Now Proposition~\ref{OC-struct} implies that the lattice $\mathbb{SC}$ is isomorphic to the subdirect product of lattices of the form $\Con(W_\lambda)$ where $\lambda$ runs over $\Lambda$. The proof of this result given in~\cite{Volkov-94} shows that the projection to $\Con(W_\lambda)$ of the image of the congruence $\nu$ under the isomorphic embedding $\mathbb{SC}$ in $\prod_{\lambda\in\Lambda}\Con(W_\lambda)$ is simply the restriction of the congruence $\nu$ to $W_\lambda$. Denote this restriction by $\nu_\lambda$. 

The statement we are to prove is obviously equivalent to the claim that, for any $\lambda\in\Lambda$, the congruence $\nu_\lambda$ is either the universal relation or the equality relation on $W_\lambda$.
Consider any elements $\mathbf U,\mathbf W\in I_\lambda$ with $\mathbf V_\lambda\vee\mathbf U=\mathbf V_\lambda\vee\mathbf W$ and $\mathbf V_\lambda\wedge\mathbf U=\mathbf V_\lambda\wedge\mathbf W$. It directly follows from the definition of $I_\lambda$ that $\mathbf U_\lambda=\mathbf U$, $\mathbf W_\lambda=\mathbf W$ and $\mathbf U_\mu=\mathbf W_\mu$ for any $\mu\in\Lambda\setminus\{\lambda\}$. Hence $\mathbf V_\mu\vee\mathbf U_\mu=\mathbf V_\mu\vee\mathbf W_\mu$ and $\mathbf V_\mu\wedge\mathbf U_\mu=\mathbf V_\mu\wedge\mathbf W_\mu$ for each $\mu\in\Lambda$. Hence, by Proposition~\ref{OC-struct}, $\mathbf V\vee\mathbf U=\mathbf V\vee\mathbf W$ and $\mathbf V\wedge\mathbf U=\mathbf V\wedge\mathbf W$. Since $\mathbf V$ is cancellable, we have $\mathbf U=\mathbf W$. We have proved that $\mathbf V_\lambda$ is a cancellable element of $I_\lambda$. Therefore, the congruence $\nu_\lambda$ is a cancellable element of the lattice $\Con(W_\lambda)$ by Proposition~\ref{OC-struct}. Let $\lambda=(\lambda_1,\lambda_2,\dots,\lambda_m)$. Our further considerations are divided into two cases.

\smallskip

\emph{Case}~1: $\lambda_1>1$. In this case, $W_\lambda$ contains (among others) the words $\mathbf u=x^{\lambda_1}x^{\lambda_2}\cdots x^{\lambda_m}$ and $\mathbf v=x^{\lambda_1-1}x^{\lambda_2}\cdots x^{\lambda_m}x_1$. It is clear that $\sigma(\mathbf u)\ne\mathbf v$ for any permutation $\sigma\in S_\lambda$. Hence the $S_\lambda$-set $W_\lambda$ is non-transitive. According to Remark \ref{stab are trivial and coincide}, the stabilizers of any two elements of this $S_\lambda$-set coincide. Now Proposition~\ref{canc in G-set} applies with the desirable conclusion.

\smallskip

\emph{Case}~2: $\lambda_1=1$. Obviously, in this case $\lambda_2=\cdots=\lambda_m=1$, $S_\lambda=S_m$ and $W_\lambda$ is a transitive $S_m$-set. As is well known (see, e.g.,~\cite[Lemma 4.20]{McKenzie-McNulty-Taylor-87}), the congruence lattice of a transitive $G$-set $A$ is isomorphic to the interval $[\Stab_A(a),G]$ in the lattice $\Sub(G)$ where $a$ is an arbitrary element of $A$. According to Remark~\ref{stab are trivial and coincide}, the stabilizer of any element of the $S_\lambda$-set $W_\lambda$ is the singleton group. Hence $\Con(W_\lambda)$ is the whole lattice $\Sub(S_m)$. It remains to refer to Lemma~\ref{canc in S_n}.

\smallskip

Theorem \ref{main} is proved.\qed

\end{document}